\newtheorem{thm}{Theorem}[section]
\newtheorem{lem}[thm]{Lemma}
\theoremstyle{remark}
\newtheorem{rem}[thm]{Remark}
\newcommand{\Z}{\mathbb{Z}}
\newcommand{\SO}{\mathrm{\SO}}
\title[Steenrod problem and the domination relation]
{Steenrod problem and the domination relation}
\author{Jean-Fran\c cois Lafont and Christoforos Neofytidis}
\address{Department of Mathematics, Ohio State University, Columbus, OH 43210, USA}
\email{jlafont@math.ohio-state.edu}
\address{Section de Math\'ematiques, Universit\'e de Gen\`eve, 2-4 rue du Li\`evre, Case postale 64, 1211 Gen\`eve 4, Switzerland}
\email{Christoforos.Neofytidis@unige.ch}
\date{\today}
\subjclass[2010]{55M25, 55N45, 55P20, 55R20, 55R40, 55S10, 55S35, 57N65}
\keywords{Steenrod problem, domination relation, simplicial volume, Thom space, Steenrod squares}
\begin{document}

\begin{abstract}
We indicate how to combine 
some classical topology (Thom's work on the Steenrod problem) with some modern topology 
(simplicial volume) to show that every map between certain manifolds must have degree zero. 
We furthermore discuss a homotopy theoretic interpretation of parts of our proof, using Thom spaces 
and Steenrod powers.
\end{abstract}

\maketitle

\section{introduction}

Gromov introduced the {\it domination relation} on closed oriented $n$-dimensional manifolds~\cite[pg. 173]{CT}. Given two closed oriented $n$-dimensional 
manifolds $M$ and $N$, we say $M$ {\it dominates} $N$ if there is a map $f\colon M\rightarrow N$ of degree $\deg(f)\neq0$. 
In this short note, we add to the known
list of obstructions by showing the relevance of Thom's work~\cite{Thom} on the Steenrod problem~\cite{EilenbergSteenrod}.

\begin{thm}\label{t:main}
Let $M_0$, $N_0$, $M'$, and $N'$ be closed oriented $n$-dimensional manifolds, $7\leq k\leq n-3$ an integer, and having the following properties:
\begin{itemize}
\item[(a)] Assume $N_0$ has a $k$-dimensional homology class that is not representable by manifolds;
\item[(b)] Every $k$-dimensional homology class of $M_0$ is representable by manifolds;
\item[(c)] $N'$ has positive simplicial volume;
\item[(d)] $M'$ is a product of closed oriented  
manifolds of dimensions $\leq 9$, where all factors have positive simplicial volume and torsion-free
homology in degrees $\leq k$.
\end{itemize}
Then there are infinitely many pairs of positive integers $(s,t)$ with the property that $||M_0 \#_s M'|| \geq ||N_0 \#_t N'||$, but nevertheless
every continuous map $f\colon M_0 \#_s M' \rightarrow N_0 \#_t N'$ has $\deg(f)=0$.
\end{thm}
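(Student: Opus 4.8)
The plan is to pit two obstructions against one another: Thom's realizability obstruction will force any nonzero degree to be \emph{at least} $2$, while the simplicial-volume inequality will force it to be \emph{strictly less} than $2$. The structural input I would isolate first is Thom's reformulation of the Steenrod problem: a class $\alpha\in H_k(X;\Z)$ is representable by manifolds exactly when it lies in the image $R_k(X):=\im\big(\Omega_k^{\mathrm{SO}}(X)\to H_k(X;\Z)\big)$ of oriented bordism under $[g\colon P\to X]\mapsto g_*[P]$. The crucial point is that $R_k(X)$ is then a \emph{subgroup}, so the quotient $\theta_X\colon H_k(X;\Z)\to C_k(X):=H_k(X;\Z)/R_k(X)$ is a homomorphism vanishing precisely on representable classes. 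I would record three formal facts: (i) if $\gamma$ is representable then so is $f_*\gamma$ for any continuous $f$ (compose the representing map); (ii) for a connected sum $X\#Y$ of $n$-manifolds with $0<k<n$, the splitting $H_k(X\#Y)\cong H_k(X)\oplus H_k(Y)$ carries representable classes to pairs of representable classes, using the degree-one collapse maps $X\#Y\to X$ and $X\#Y\to Y$ and fact (i); and (iii) by Thom some positive multiple $N\alpha$ is always representable, so every element of $C_k(X)$ has finite order.

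Next I would establish that \emph{every} $k$-class of $M:=M_0\#_s M'$ is representable while a chosen $k$-class of $N:=N_0\#_t N'$ is not. For $M'=P_1\times\cdots\times P_r$, torsion-freeness in degrees $\le k$ makes the relevant Tor-terms vanish, so the K\"unneth decomposition exhibits $H_k(M')$ as a sum of external products of classes on the factors; since external products of representable classes are representable (take the product of the representing maps), it suffices that every class on each $P_j$ be representable. This is exactly where $\dim P_j\le 9$ is used: classes of degree $\le 6$ are representable by Thom, while classes of degree $\ge \dim P_j-2$ are Poincar\'e dual to $H^0$, $H^1$ or $H^2$, hence to submanifolds (for $H^2$, the zero locus of a generic section of the corresponding line bundle), and because $9-2=7$ these two ranges cover every degree $\le 9$. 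Combining this with (b) and the connected-sum fact (ii) gives $R_k(M)=H_k(M)$. On the other hand, the non-representable class of (a) includes into $N$ as a class $\tilde\beta$; applying the degree-one collapse $N\to N_0$ and fact (i) shows $\tilde\beta$ remains non-representable, so $\theta_N(\tilde\beta)\neq 0$ has finite order $m\ge 2$.

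The degree argument is then short. Given $f\colon M\to N$ of degree $d$, I would form the umkehr homomorphism $f_!=\mathrm{PD}_M\circ f^*\circ \mathrm{PD}_N^{-1}\colon H_k(N)\to H_k(M)$, which satisfies $f_*f_!=d\cdot\id$ on $H_k(N)$ by the projection formula. Since $R_k(M)=H_k(M)$, the class $f_!\tilde\beta$ is representable, hence so is $d\tilde\beta=f_*f_!\tilde\beta$ by (i). Therefore $0=\theta_N(d\tilde\beta)=d\,\theta_N(\tilde\beta)$, which forces $m\mid d$; in particular $d\neq 0$ implies $|d|\ge m\ge 2$.

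Finally I would close with simplicial volume. Because $n\ge k+3\ge 10$, Gromov's additivity under connected sum gives $\|M\|=\|M_0\|+s\|M'\|$ and $\|N\|=\|N_0\|+t\|N'\|$, where $\|N'\|>0$ by (c) and $\|M'\|>0$ since a product of manifolds of positive simplicial volume has positive simplicial volume (cross-multiply bounded cohomology fundamental cocycles). The fundamental degree inequality yields $|d|\le \|M\|/\|N\|$. Since $\|M'\|,\|N'\|>0$, for every sufficiently large $t$ the interval $[\,\|N\|,\,2\|N\|\,)$ has length $\|N\|>\|M'\|$ and hence contains $\|M_0\|+s\|M'\|$ for some integer $s$; this produces infinitely many pairs $(s,t)$ with $\|N\|\le\|M\|<2\|N\|$. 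For each such pair $\|M\|\ge\|N\|$ holds, while a nonzero-degree map would give $2\le|d|\le \|M\|/\|N\|<2$, a contradiction; hence every map has degree zero. The main obstacle I anticipate is the second step, namely proving full representability of $H_k(M_0\#_sM')$: it is the one place where one must combine Thom's low-degree theorem, the codimension-$\le 2$ Poincar\'e-dual submanifold trick tied to $\dim\le 9$, the torsion-free K\"unneth splitting, and the connected-sum behavior all at once, whereas the interaction with simplicial volume is comparatively formal once the subgroup $R_k$ and the order $m\ge 2$ are in hand.
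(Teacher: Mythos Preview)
Your approach is essentially the paper's: establish full representability of $H_k(M_0\#_s M')$ via Thom's theorem, a product lemma using the torsion-free K\"unneth splitting, and a connected-sum lemma; use this to force $|\deg f|\ge 2$; then play that off against the simplicial-volume inequality to produce infinitely many pairs $(s,t)$.

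Two small comments. First, your fact (ii) as written only gives one implication (representable in the connected sum $\Rightarrow$ representable in each summand, via the collapse maps); to conclude $R_k(M_0\#_s M')=H_k$ you also need the converse, which the paper handles by homotoping a representing map $Y^k\to X_i$ into $X_i\setminus D^n\hookrightarrow X_1\#\cdots\#X_m$ (possible since $k<n$) and then summing inside the subgroup $R_k$. Second, your umkehr argument giving $m\mid d$ is a mild sharpening of the paper's main proof, which only rules out $|\deg f|=1$ via surjectivity of $f_*$; the paper records exactly your Poincar\'e-duality version as a separate remark on widening the admissible range of slopes. Your construction of the pairs---fix $t$ large so that the interval $[\|N\|,2\|N\|)$ has length exceeding $\|M'\|$ and then choose $s$---is a clean alternative to the paper's comparison of two linear bounds in $s$ with slopes $\|M'\|/(2\|N'\|)$ and $\|M'\|/\|N'\|$.
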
 

In general, it is a difficult question to determine whether there is a non-zero degree map between two manifolds. Obstructions to the existence of a map of non-zero degree have been developed using a variety of tools from algebraic topology; we refer to~\cite{dH} for a recent survey on related results. 
One of the most basic methods comes from comparing the cohomology of $M$ and $N$. Some known obstructions arising from 
cohomology include: (i) inequalities on the ranks of the (co)homology groups, (ii) injectivity of induced homomorphisms in cohomology,
(iii) (sub)ring structures of the cohomology ring, and (iv) structure of the cohomology ring as a module over the Steenrod algebra; see~\cite{DW,Neo,LN,K} for various results that involve the aforementioned techniques. However, when the cohomology rings of the
manifolds are not {\it a priori} known, such as in Theorem \ref{t:main}, then these basic obstructions cannot be directly used. 

Another approach to obstructing a non-zero degree map comes from semi-norms on homology, such as the simplicial volume $||M||$. Recall that the simplicial
volume $||M||$ is a non-negative real number (introduced by Gromov~\cite{Gromov}) that roughly measures how efficiently $M$ can be ``triangulated over $\mathbb R$''. This
invariant has the so-called {\it functorial} property that, if $f:M\rightarrow N$ is a continuous map, then $||M||\geq|\deg(f)|\cdot||N||$. While the simplicial volume
cannot be used by itself to establish Theorem \ref{t:main}, it will feature prominently in the proof. 

In fact, the key idea of our proof is to use the
Steenrod problem as a bridge between these two main tools -- cohomology obstructions, and simplicial volume. This allows us to use {\it both}
techniques together in a situation where neither one individually is strong enough to work.

\subsection*{Outline}
In Section \ref{s:proof} we prove Theorem \ref{t:main} and in Section \ref{s:examples} we give examples of manifolds satisfying the assumptions of that theorem. Finally, in Section \ref{s:remarks} we discuss alterations of our hypotheses, as well as alternative arguments and approaches for parts of our proofs. 

\subsection*{Acknowledgments}
Part of this work was carried out during collaborative visits of the authors at Ohio State University and at University of Geneva. The authors thank these institutions for their hospitality. J.-F. L. was partially supported by the U.S.A. NSF, under grants DMS-1510640 and DMS-1812028. C. N. was partially supported by the Swiss NSF, under grant FNS200021$\_$169685.

\section{Proof of the Main Theorem}\label{s:proof}

In order to establish Theorem \ref{t:main}, we proceed by contradiction. Assuming that there is a non-zero degree map
$f\colon M_0 \#_s M' \longrightarrow N_0 \#_t N'$, we will see that this forces certain constraints on the integers $s$ and $t$, and
that there are infinitely many pairs of integers for which these constraints fail to hold. To simplify notation, let us denote
by 
$$M_s := M_0 \#_s M' \ \text{and} \ N_t:= N_0 \#_t N',$$ 
and assume we have a non-zero degree map $f\colon M_s\rightarrow N_t$.

We recall that a degree $k$ homology class $x\in H_k(X ; \mathbb Z)$ is {\it representable}
(by manifolds) if there exists a closed oriented $k$-manifold $Y$ and a map $\phi\colon Y\rightarrow X$ with the property that 
$\phi_*([Y]) = x$, where $[Y]\in H_k(Y; \mathbb Z)$ denotes the fundamental class of $Y$. Thom~\cite{Thom} proved that, when $X$ is an $n$-manifold, homology classes are always representable in degrees $1\leq k \leq 6$ and
degrees $n-2\leq k \leq n$. Thus one can only have non-representable classes in degrees $7\leq k \leq n-3$. In view of hypothesis
(a), we will henceforth focus on a degree $k$ within that range (in particular, $n\geq 10$). 
In the context of connected sums of manifolds $X= X_1 \# \cdots \# X_m$, Mayer-Vietoris gives us a splitting
$$H_k(X ; \mathbb Z) = \bigoplus _{i=1}^m H_k(X_i ; \mathbb Z).$$
Let us denote by $\rho_i \colon H_k(X_i ; \Z) \rightarrow H_k(X ; \Z)$ the injective homomorphism induced from the Mayer-Vietoris
sequence. We will need the following elementary result.

\begin{lem}\label{conn-sums}
For a connect sum $X= X_1 \# \cdots \# X_m$ of closed $n$-manifolds, and $7\leq k \leq n-3$, the following two statements
are equivalent:
\begin{enumerate}
\item every homology class $\alpha \in H_k(X ; \Z)$ is representable;
\item for every $i$, every homology class $\beta \in H_k(X_i ; \Z)$ is representable.
\end{enumerate}
\end{lem}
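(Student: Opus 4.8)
The plan is to prove the two implications separately, organized around the degree-one collapse maps $c_i \colon X \to X_i$ (which fix the $i$-th summand and crush everything else to a point) together with the inclusions $\iota_i \colon X_i^\circ \hookrightarrow X$ of the punctured summands $X_i^\circ := X_i \setminus \mathrm{int}(D^n)$. First I would record the homological bookkeeping: in the range $2 \le k \le n-2$, which contains our $k$, deleting an open ball does not change $H_k$, so the inclusion $X_i^\circ \hookrightarrow X_i$ induces an isomorphism $\lambda_i \colon H_k(X_i^\circ) \xrightarrow{\cong} H_k(X_i)$, and the Mayer--Vietoris injection is identified with $\rho_i = (\iota_i)_* \circ \lambda_i^{-1}$. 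Since $c_i \circ \iota_i$ is precisely the inclusion $X_i^\circ \hookrightarrow X_i$, one gets $(c_i)_* \circ (\iota_i)_* = \lambda_i$, and hence the key relation $(c_i)_* \circ \rho_i = \id$ on $H_k(X_i ; \Z)$.

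For the implication $(1) \Rightarrow (2)$, I would argue directly. Given $\beta \in H_k(X_i ; \Z)$, the class $\rho_i(\beta) \in H_k(X ; \Z)$ is representable by hypothesis, say by $\psi \colon Y \to X$ with $\psi_*([Y]) = \rho_i(\beta)$. Post-composing with the collapse map gives $(c_i \circ \psi)_*([Y]) = (c_i)_* \rho_i(\beta) = \beta$, so $c_i \circ \psi \colon Y \to X_i$ represents $\beta$.

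For the converse $(2) \Rightarrow (1)$, I would first note that representable classes form a subgroup of $H_k(X ; \Z)$: disjoint unions realize sums and orientation reversal realizes negatives. Since every $\alpha$ splits as $\alpha = \sum_i \rho_i(\alpha_i)$ with $\alpha_i := (c_i)_*(\alpha)$, it then suffices to represent each summand $\rho_i(\alpha_i)$ individually. By hypothesis $\alpha_i$ is represented by some $\phi_i \colon Y_i \to X_i$. The one genuinely geometric step is to push this representative off the connect-sum region so that it lands in $X_i^\circ \subset X$: because $\dim Y_i = k < n$, a general-position homotopy makes $\phi_i$ miss the center $p$ of the removed ball, and the deformation retraction of $X_i \setminus \{p\}$ onto $X_i^\circ$ then produces a map $\phi_i' \colon Y_i \to X_i^\circ$ satisfying $\lambda_i (\phi_i')_*([Y_i]) = \alpha_i$, i.e. $(\phi_i')_*([Y_i]) = \lambda_i^{-1}(\alpha_i)$. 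Composing with $\iota_i$ yields $\iota_i \circ \phi_i' \colon Y_i \to X$ representing $(\iota_i)_* \lambda_i^{-1}(\alpha_i) = \rho_i(\alpha_i)$, as needed.

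I expect this dimension-counting push-off to be the crux; everything else is formal once the relation $(c_i)_* \circ \rho_i = \id$ and the identification of $\rho_i$ with ``include a representative into the $i$-th punctured summand'' are in place. The hypotheses on $k$ are comfortably sufficient, providing both the Mayer--Vietoris splitting (needing $k \ne 0, n-1$) and the transversality argument (needing $k < n$).
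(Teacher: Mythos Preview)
Your proof is correct and follows essentially the same strategy as the paper. The direction $(1)\Rightarrow(2)$ is identical (collapse map composed with a representing map). For $(2)\Rightarrow(1)$ there is a minor cosmetic difference: the paper assembles the pieces by forming the connected sum $Y_1\#\cdots\#Y_m$ and routing the map through the wedge $X_1\vee\cdots\vee X_m \to X$, whereas you invoke the subgroup property of representable classes (disjoint unions) and supply an explicit general-position push-off of each $\phi_i$ into $X_i^\circ$. Your version is arguably more careful on this point, since the paper's map $\rho\colon X_1\vee\cdots\vee X_m\to X$ implicitly relies on the same dimension-count (one must homotope each $\phi_i$ off a point to land in $X_i^\circ\subset X$, or equivalently use that $X_i^\circ\hookrightarrow X_i$ is an $H_k$-isomorphism). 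Either packaging yields the same conclusion with the same hypotheses.
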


\begin{proof}
To see that (1) implies (2), let $\beta\in H_k(X_i ; \Z)$ be a homology class in one of the summands. Then via the Mayer-Vietoris
splitting, we can consider the homology class $(-1)^{i+1}\rho_i(\beta) \in H_k(X ; \Z)$. From (1), we can represent this homology class, so 
there is a map from a closed oriented $k$-manifold $\phi\colon Y^k \rightarrow X$ with $(-1)^{i+1}\rho_i(\beta) = \phi_*([Y])$. Let $q_i\colon X\rightarrow X_i$ 
be the map that collapses all the other summands to points. Then it is easy to see that the composite $(q_i)_* \circ (-1)^{i+1}\rho_i$ is the
identity map on $H_k(X_i ; \Z)$. So by taking the composite $q_i \circ \phi\colon Y\rightarrow X \rightarrow X_i$, we obtain a continuous
map from a closed oriented $k$-manifold having the property that 
$$(q_i \circ \phi)_* ([Y]) = (q_i)_*\left(\phi_*([Y])\right) = (q_i)_*\left((-1)^{i+1}\rho_i(\beta)\right) = \beta.$$
Since $\beta$ and $i$ were arbitrary, this establishes (2).

To see that (2) implies (1), let $\alpha \in H_k(X ; \Z)$ be an arbitrary homology class. Using the Mayer-Vietoris splitting, we can
write $\alpha = \sum_i \rho_i( \alpha_i)$, where each $\alpha_i \in H_k(X_i ; \Z)$. From (2), we have for each $i$ a closed oriented
$k$-manifold $Y_i$ and a map $\phi_i\colon Y_i \rightarrow X_i$ having the property that $(\phi_i)_*([Y_i]) = \alpha_i$. 
Let now $Y:=Y_1\#\cdots\# Y_m$ and define the following composite map
$$Y\stackrel{q}\longrightarrow Y_1\vee\cdots\vee Y_m\xrightarrow{\vee_{i=1}^m\phi_i}X_1\vee\cdots\vee X_m  \stackrel{\rho}\longrightarrow X,$$
where $q$ is the quotient map pinching to a point the essential sphere that defines the connected sum $Y$, $\vee_{i=1}^m\phi_i$ restricts to $\phi_i$ on each $Y_i$, and $\rho$ restricts to the inclusion of each $X_i$ in $X$. Then 
\begin{align*}
(\rho\circ(\vee_{i=1}^m)\phi_i\circ q)_*([Y]) & = (\rho\circ(\vee_{i=1}^m\phi_i))_*([Y_1\vee\cdots\vee Y_m])\\
& = \rho_*\left(\sum_{i=1}^m\phi_*([Y_i])\right)\\
& = \rho_*\left(\sum_{i=1}^m\alpha_i\right) = \sum_{i=1}^m \rho_i(\alpha_i) = \alpha,
\end{align*}
and this completes the proof.
\end{proof}

An immediate consequence of Lemma \ref{conn-sums} is that, in view of hypothesis (a), every $N_t$ has some non-representable degree $k$ 
homology class. We would now like to consider the corresponding question for the source manifolds $M_s$. There is the following elementary

\begin{lem}\label{products}
For a product $X= X_1\times \cdots \times X_m$ of closed manifolds 
and a given integer $k$, assume that for every
factor $X_i$ we have
\begin{enumerate}
\item every homology class $\alpha \in H_j(X_i ; \Z)$, $j\leq k$, is representable, and
\item the homology groups $H_j(X_i ; \Z)$ are torsion-free for $j\leq k$.
\end{enumerate}
Then every homology class $\alpha \in H_j(X; \Z)$ is representable.
\end{lem}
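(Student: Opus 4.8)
The plan is to reduce representability of an arbitrary class in $H_j(X;\Z)$ with $j\leq k$ to representability of classes on the individual factors, using the Künneth theorem together with the multiplicativity of fundamental classes under products. The role of hypothesis (2) is precisely to kill the $\mathrm{Tor}$ terms in Künneth so that the decomposition is a clean tensor product realized by homological cross products.

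First I would record, by induction on the number of factors, that each \emph{partial} product $X_1\times\cdots\times X_r$ again has torsion-free homology in degrees $\leq k$. In the inductive step one applies the Künneth short exact sequence for $(X_1\times\cdots\times X_{r-1})\times X_r$: in degree $j\leq k$ the tensor summands $H_a(X_1\times\cdots\times X_{r-1})\otimes H_b(X_r)$ (with $a+b=j$) are tensor products of finitely generated torsion-free groups, hence free, while the $\mathrm{Tor}$ summands (with $a+b=j-1$) vanish because both arguments are torsion-free. The same computation shows the full Künneth formula carries no $\mathrm{Tor}$ contribution in degrees $\leq k$, and iterating it produces a natural isomorphism
\[
H_j(X;\Z)\;\cong\;\bigoplus_{j_1+\cdots+j_m=j}H_{j_1}(X_1;\Z)\otimes\cdots\otimes H_{j_m}(X_m;\Z),
\]
under which the cross product realizes each tensor factor. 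Hence an arbitrary $\alpha\in H_j(X;\Z)$ is a finite sum of cross products $\alpha_1\times\cdots\times\alpha_m$ with $\alpha_i\in H_{j_i}(X_i;\Z)$ and $\sum_i j_i=j\leq k$; in particular each $j_i\leq k$, so hypothesis (1) makes every such $\alpha_i$ representable.

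Next I would represent a single cross product. Choosing for each $i$ a closed oriented $j_i$-manifold $Y_i$ and a map $\phi_i\colon Y_i\to X_i$ with $(\phi_i)_*([Y_i])=\alpha_i$, I set $Y:=Y_1\times\cdots\times Y_m$ and $\phi:=\phi_1\times\cdots\times\phi_m\colon Y\to X$. Then $Y$ is a closed oriented $j$-manifold with $[Y]=[Y_1]\times\cdots\times[Y_m]$, and naturality of the cross product gives
\[
\phi_*([Y])=(\phi_1)_*([Y_1])\times\cdots\times(\phi_m)_*([Y_m])=\alpha_1\times\cdots\times\alpha_m,
\]
so each cross-product summand is representable.

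Finally I would close representability under sums: if $\alpha=\psi_*([Z])$ and $\beta=\chi_*([W])$ for closed oriented $j$-manifolds $Z,W$, then the disjoint union $Z\sqcup W$ is again a closed oriented $j$-manifold with fundamental class $[Z]+[W]$, and the map restricting to $\psi$ and $\chi$ sends it to $\alpha+\beta$ (one may equally use connected sums, exactly as in the proof of Lemma \ref{conn-sums}). Applying this to the finitely many cross-product summands of $\alpha$ finishes the argument. The only genuine subtlety is the bookkeeping in the first step — verifying that no $\mathrm{Tor}$ terms intrude in the iterated Künneth decomposition throughout the range $j\leq k$, which is exactly where torsion-freeness is needed; the remaining steps are the standard multiplicativity and naturality of fundamental classes and cross products.
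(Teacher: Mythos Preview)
Your argument is correct and follows essentially the same route as the paper: use the torsion-free hypothesis to eliminate the $\mathrm{Tor}$ terms in the iterated K\"unneth decomposition, represent each elementary tensor by a product of representing manifolds and their product map, and then close under finite sums. You supply more detail than the paper does (the induction showing torsion-freeness persists for partial products, and the explicit disjoint-union step for sums), but the strategy is identical.
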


\begin{proof}
It is sufficient to show that an additive basis for the homology $H_j(X; \Z)$ ($j\leq k$) is representable. 
Note that from hypothesis (2), a simple induction shows there is no Tor term in the K\"unneth formula for $H_j(X; \Z)$.
Thus $H_j(X; \Z)$ is generated by cohomology classes of the form $\alpha_1\otimes \cdots \otimes \alpha_m$,
where $\alpha_i \in H_{j_i}(X_i ; \Z)$ and $j=\sum j_i$. Note that each $j_i \leq j \leq k$, so from hypothesis (1), we have 
closed oriented manifolds $Y_i$ with $\dim(Y_i) = j_i$, and a map $\phi_i\colon Y_i \rightarrow X_i$ satisfying $(\phi_i)_*([Y_i])=\alpha_i$.
Then forming the product map
$$\prod_{i=1}^m \phi_i \colon \prod_{i=1}^m Y_i \rightarrow \prod_{i=1}^m X_i = X,$$
 we have that
$$(\prod_{i=1}^m \phi_i )_*\left( \left[\prod_{i=1}^m Y_i\right] \right) = \bigotimes_{i=1}^m \left((\phi_i)_*([Y_i])\right) = \alpha_1 \otimes \ldots \otimes \alpha_m= \alpha.$$
This completes the proof of the lemma.
\end{proof}

\begin{rem}
As we shall see in Section \ref{ss:product}, Lemma \ref{products} does not hold if we remove the torsion-free assumption (2).
\end{rem}

Now from hypotheses (b) and (d), it follows that for all of the $M_s$, every degree $k$ homology class is representable.
Indeed, Thom showed that for manifolds of dimension $\leq 9$, every homology class is representable. Applying Lemma \ref{products}, 
we see  $M'$ has every homology class representable.  Lemma \ref{conn-sums} then
tells us that every degree $k$ homology class of $M_s$ is representable.

Next, we claim that if $f\colon M_s\rightarrow N_t$ has non-zero degree, then 
$|\deg (f)| \geq 2$. To see this, let us assume $|\deg(f)|=1$ and argue by contradiction. Since $f$ has degree one, we know
that $f_*\colon H_k(M_s ; \Z)\rightarrow H_k(N_t ; \Z)$ is surjective. So pick a class $y\in H_k(M_s ; \Z)$ with the property that 
$f_*(y)=x$, where $x$ is the non-representable class (recall that such class exists by assumption (a) and Lemma \ref{conn-sums}). Then since every class in $M_s$ is representable, there is a closed oriented
$k$-manifold $Y$ and a map $\phi: Y \rightarrow M_s$ with $\phi_*([Y]) = y$. Composing with $f$, we obtain a map 
$f\circ \phi\colon Y\rightarrow N_t$, and 
$$(f\circ \phi)_*([Y]) = f_*\left(\phi_* ([Y])\right) = f_*(y) = x,$$
which contradicts the fact that $x$ was non-representable. Thus we see that the Steenrod problem yields a {\it lower bound} on 
the degree of our hypothetical map.

Finally, to complete the proof, let us make use of the simplicial volume to obtain an incompatible inequality. Recall that, for manifolds of dimension 
$>2$, the simplicial volume is additive under connected sums. From our discussion above, if $f$ is a non-zero degree map, then
$|\deg(f)|\geq 2$. Noting that
our manifolds have dimension $n\geq 10$, we obtain
$$2\leq |\deg(f)| \leq \frac{||M_0 \#_s M'||}{||N_0 \#_t N'||} = \frac{||M_0|| + s ||M'||}{||N_0|| + t ||N'||}$$
Solving, we see that $t$ has to satisfy the linear upper bound
$$t \leq s\left( \frac{||M'||}{2 ||N'||} \right) + \left(\frac{||M_0|| - 2 ||N_0||}{2 ||N'||}\right).$$
Note that, by hypothesis (c), we have $||N'||>0$, so the expression above is indeed well defined. On the other hand,
the condition $||M_s|| \geq ||N_t||$ translates to the inequality
$$t\leq s\left( \frac{||M'||}{ ||N'||} \right) + \left(\frac{||M_0|| - ||N_0||}{ ||N'||}\right).$$
We conclude that, as long as the integer $t$ satisfies the inequality 
$$s\left( \frac{||M'||}{2 ||N'||} \right) + \left(\frac{||M_0|| - 2 ||N_0||}{2 ||N'||}\right)< t \leq s\left( \frac{||M'||}{ ||N'||} \right) + \left(\frac{||M_0|| - ||N_0||}{ ||N'||}\right)$$
there are no non-zero degree maps $f\colon M_s \rightarrow N_t$, even though $||M_s||\geq ||N_t||$. By hypothesis (d), the simplicial volume of the factors of $M'$ is positive, and since positivity of simplicial volume is inherited by products~\cite{Gromov}, we conclude 
that $||M'||>0$. Thus the linear bounds above both have (different) positive slopes. 
Comparing these slopes, it immediately follows that there are infinitely
many pairs of positive integers $s,t$ which satisfy the inequality above, concluding the proof of Theorem \ref{t:main}.

\section{Examples}\label{s:examples}

In this section we give examples of manifolds that fulfill the hypotheses of Theorem \ref{t:main}. 

\subsection{Examples of $N_0$, $N'$.}\label{ss:N_0,N'}
 Concerning $N_0$, Thom's work \cite{Thom} already featured examples of spaces with some non-representable 
homology class. For instance, he shows that one can take the product of two $7$-dimensional Lens spaces 
$L(7,3) \times L(7,3)$, resulting in a closed $14$-dimensional manifold containing an explicit $7$-dimensional
homology class which is not representable (see \cite[pgs. 62-63]{Thom}). 
Another explicit example can be found in a paper of Bohr--Hanke--Kotschick, who consider the $10$-dimensional
compact Lie group $Sp(2)$, and give an explicit $7$-dimensional homology class which is not representable (see \cite[pgs. 484-485]{BHK}).
Taking products with spheres yields higher dimensional examples with not representable classes. 

If the reader so desires, one can even arrange for $N_0$ to be aspherical. Indeed, one can apply the hyperbolization technique of 
Charney--Davis \cite{CD} to an $n$-manifold $X$ with a not representable homology class. The resulting $n$-manifold $h(X)$ will 
be aspherical, and it is easy to check that it will also have a not representable homology class. Iterating this, one can obtain 
infinitely many distinct $n$-manifolds, all of which have a not representable homology class. The cohomology rings of these 
manifolds are hard to explicitly compute.

Manifolds $N'$ are much easier to produce. Indeed, examples of manifolds with non-zero simplicial volume include
\begin{itemize}
\item closed aspherical manifolds with non-cyclic hyperbolic fundamental groups~\cite{Min,Min1,Gromov1}, such as closed negatively curved manifolds \cite{IY}, and
\item closed locally symmetric manifolds of non-compact type~\cite{BK,LS}.
\end{itemize}
Moreover, if $X$ is any $n$-manifold, then applying the 
Charney--Davis hyperbolization produces an $h(X)$ with the property that $||h(X)||>0$. Taking products and connected sums 
preserves the positivity of simplicial volume, giving rise to many possibilities for $N'$.

\subsection{Examples of $M_0$, $M'$.} Concerning $M_0$, we need to identify manifolds with all degree $k$ homology classes 
representable by manifolds. It is easier to seek manifolds with {\it all} homology classes (of any degree) representable by manifolds
-- call this property (R). There are a few basic examples and constructions that one can use. First
of all, tori always have property (R). Also, if follows from work of Thom \cite{Thom} that any closed manifold of dimension $\leq 9$ 
has property (R). 

In addition, one can use Lemma \ref{products} to produce higher dimensional examples with property (R). In order to do this, one needs
low dimensional examples which also have torsion-free homology groups (see our condition (d) for $M'$). In general, it seems difficult to
produce negatively curved manifolds with no torsion in their homology groups.   Examples are known in dimension $n=3$, where one
can use Dehn surgery to construct infinitely many closed hyperbolic $3$-manifolds which are (integral) homology $3$-spheres
(see \cite[pgs. 341-342]{RT}). This was extended to dimension $n=4$ by Ratcliffe--Tschantz \cite{RT}, who constructed infinitely
many aspherical homology $4$-spheres. Taking products of surfaces with these $3$- and $4$-dimensional examples, our Lemma
\ref{products} yields examples in all higher dimensions. The authors do not know of any non-product examples in dimensions
$\geq 5$.

Concerning $M'$, one wants to additionally ensure that the factors have positive simplicial volume. This is immediate for surfaces
and for the hyperbolic $3$-manifolds. While the $4$-dimensional examples of Ratcliffe--Tschantz are no longer hyperbolic, one can 
still show that the examples have positive simplicial volume. Thus any product of such manifolds would produce a valid $M'$.

\section{Concluding remarks}\label{s:remarks}

We finish our discussion with a few remarks on the hypotheses of our main result, as well as with a homotopy theoretic viewpoint of parts of its proof.

\subsection{On the product lemma.}\label{ss:product}
 We note that part of the difficulty in finding examples of $M_0$, $M'$ for our main theorem
is linked with the ``torsion-free homology'' hypothesis in our product Lemma \ref{products}. We note however that Lemma \ref{products}
is false if one removes hypothesis (2): Thom's example discussed above provides a counterexample. In Thom's example, $M:=L(7,3) \times
L(7,3)$ is a product of $7$-dimensional manifolds, so both factors have property (R). Nevertheless, Thom shows the product
$M$ does {\bf not} have property (R), as there is an explicit class in $H_7(M; \Z)$ which is non-representable. This is directly 
linked to the fact that, in the K\"unneth formula for the homology of $M$, the Tor term is non-zero (due to torsion in the homology
of $L(7,3)$). Thus the collection of homology classes that are ``obviously'' representable only form a finite index subgroup (of index
$9$) inside $H_7(M ; \Z)$. 

\subsection{The range of slopes}
Given any class $x\in H_k(N_t;\Z)$, Poincar\'e duality implies that, if $f\colon M_s\to N_t$ has non-zero degree, then there exists $y\in H_k(M_s;\Z)$ such that $f_*(y)=\deg(f)\cdot x$. Since every class in $M_s$ is representable by manifolds, we conclude as in the proof of Theorem \ref{t:main} that $\deg(f)\cdot x$ is representable. Thus, if the class $x$ in item (a) has the property that the multiples $x,  2\cdot x,...,(d-1)\cdot x$ are not representable, then this immediately yields the lower bound $|\deg(f)|\geq d$. Repeating the last calculation of the proof of our main theorem, which involves the simplicial volume, we conclude that as long as $t$ satisfies the inequality
$$s\left( \frac{||M'||}{d||N'||} \right) + \left(\frac{||M_0|| - d ||N_0||}{d ||N'||}\right)< t \leq s\left( \frac{||M'||}{ ||N'||} \right) + \left(\frac{||M_0|| - ||N_0||}{ ||N'||}\right),$$
there are no maps $f\colon M_s\rightarrow N_t$ of non-zero degree, even though $\|M_s\|\geq\|N_t\|$.
Therefore, the integer $d$ on the left-hand side of the above inequality which reflects the minimum number for which $d\cdot x$ is representable by manifolds, determines the range where we can find pairs $(s,t)$ satisfying Theorem \ref{t:main}.

\subsection{Using mapping degree sets and other semi-norms.} 
In hypotheses (c) and (d) of Theorem \ref{t:main}, the manifolds $N'$ and $M'$ have positive simplicial volume. For $M'$ this is due to positivity of simplicial volume being preserved under products. Also, both $M_s$ and $N_t$ have positive simplicial volume because of the additivity of the simplicial volume under taking connected sums (recall that their dimension is $n\geq10$). It would be interesting to replace $N'$ and $M'$ with manifolds whose simplicial volume is zero, but which have another non-vanishing semi-norm $\nu$ so that $\nu(M_s)\geq \nu(N_t)$ but $M_s$ does not dominate $N_t$. One major difficulty is that not many examples of $\nu$ seems to be known with well-understood behaviour under taking products or connected sums. Furthermore, in order to apply Theorem \ref{t:main}, one would need all assumptions of item (d) to be satisfied (or just find a single $M'$ with every degree $k$ homology class representable). It is natural to believe that (non virtually trivial) circle bundles over negatively curved manifolds, certain hypertori bundles over locally symmetric manifolds of non-compact type, and products of such spaces are good candidates for such new $N'$ and $M'$. Indeed, for many of those manifolds it is known that they can be dominated by another manifold with only finitely many different degrees, see~\cite{NeoHopfdegrees,DLSW,Neo}.

\subsection{Homotopy theoretic viewpoint.}  
We used the representable versus non-representable classes to argue that the induced map $f_*$ on 
integral homology could not be surjective, and hence gave the lower bound $|\deg (f)| \geq 2$. We now indicate 
how Thom's work on the Steenrod problem \cite{Thom} gives a homotopical viewpoint on that portion of our proof.

Thom showed that $x\in H_k(X ;\mathbb Z)$ is 
representable if and only if the Poincar\'e dual $PD(x) \in H^{n-k}(X ; \mathbb Z)$ can be realized as a pullback 
$PD(x) = \psi^*(\tau)$, where $\psi\colon X \rightarrow MSO(n-k)$ is a continuous map, and $\tau \in MSO(n-k)$ is the Thom class in the 
Thom space $MSO(n-k)$. Of course, from the homotopical viewpoint, we can also view the Thom class as a map 
$$\tau\colon MSO(n-k) \rightarrow K(\Z, n-k),$$ well-defined up to homotopy. Post-composing with $\tau$ then induces, for any space $X$, a map 
$$
\xymatrix{
[X, MSO(n-k)] \ar[r]^{\tau \circ -} & [X, K(\Z, n-k)]. \\
}
$$
The target space is just $H^{n-k}(X ; \Z)$, and stating that a manifold $X$ has every degree $k$ homology 
class representable is equivalent to stating that the map above is surjective, i.e. that every map $g\colon X \rightarrow K(\Z, n-k)$ factors,
up to homotopy, through $\tau\colon MSO(n-k) \rightarrow K(\Z, n-k)$, 
$$
\xymatrix{
 & MSO(n-k) \ar[d]^\tau \\
 X \ar@{..>}[ru]^{\tilde g} \ar[r]_-{g} & K(\Z, n-k)\\
}
$$

With this in hand, we can interpret the inequality $|\deg (f)|\geq 2$ in our proof homotopically as follows. If $|\deg(f)|=1$, then
the induced map $f^*$ on cohomology is injective. We can now consider the following commutative diagram
$$
\xymatrix{
[N_t, MSO(n-k)] \ar[d]_{\tau \circ -}&  \ar@{->>}[d]^{\tau \circ -}  \ar[l]_{PT}[M_s, MSO(n-k)]\\
[N_t, K(\Z, n-k)] \ar@{^{(}->}[r]^{-\circ f}& [M_s, K(\Z, n-k)]  \\
}
$$
Indeed, Thom's work, our hypotheses (b) and (d), and Lemmas \ref{conn-sums}, \ref{products} allows us to see that the right hand 
vertical map is surjective. On the other hand, our hypothesis (a) and Lemma \ref{conn-sums} implies that the left hand vertical map
is {\bf not} surjective. Finally, the existence of the top map, which is guaranteed from the Pontrjagin-Thom construction, along with
commutativity of the diagram, yields a contradiction. Thus, from this viewpoint, our argument is making use of whether or not maps
to the classifying space $K(\Z, n-k)$ can factor through $MSO(n-k)$. It is tempting to wonder if one could similarly obtain
lower bounds on the degree by considering whether or not maps factor through other canonical spaces.

\subsection{Relation with Steenrod powers.} In both of the examples of $N_0$ mentioned in Section \ref{ss:N_0,N'}, the non-representability of the
homology class is detected via a suitable Steenrod power. It is natural to ask whether our main theorem could also be established
purely by considering the cohomology structure as a module over the Steenrod algebra (by arguments similar to those in \cite{LN}).
This is closed related to the problem of understanding the homotopy type of the spaces $MSO(n-k)$, which generally seems to be
a difficult problem. 

In contrast, it is worth noting that the unoriented version of Steenrod's problem, corresponding to maps into $MO(n-k)$, is indeed
completely detectable in terms of Steenrod squares (see \cite[Section 6, pgs. 36-43]{Thom}). From the homotopy theoretic viewpoint,
the key difference is that $MO(r)$ has the $2r$-homotopy type of a {\it product} of Eilenberg-MacLane spaces, i.e. the first $2r$ 
stages of the Postnikov tower are just an iterated product. In contrast, the Postnikov tower for $MSO(n-k)$ has non-trivial fiber bundles
appearing at early stages. This opens up the possibility of having some higher cohomology operations appearing as obstructions 
(see \cite{K} for an illustration of these sorts of phenomena).

\bibliographystyle{amsplain}

\begin{thebibliography}{10}

\bibitem{BHK}
C. Bohr, B. Hanke and D. Kotschick, {\em Cycles, submanifolds, and structures on normal bundles}, Manuscripta Math. {\bf 108} (2002), no. 4, 483--494. 

\bibitem{BK}
M. Bucher-Karlsson, {\em Simplicial volume of locally symmetric spaces covered by $SL_3\mathbb R /SO(3)$}, Geom. Dedicata {\bf 125} (2007), 203--224. 

\bibitem{CT}
J. A. Carlson and D. Toledo, {\em Harmonic mappings of K\"ahler manifolds to locally symmetric spaces}, Inst. Hautes \'Etudes Sci. Publ. Math. {\bf 69} (1989), 173--201.

\bibitem{CD}
R. Charney and M. W. Davis, {\em Strict hyperbolization}, Topology {\bf 34} (1995), no. 2, 329--350. 

\bibitem{DLSW}
P. Derbez, Y. Liu, H. Sun and S. Wang, {\em Volume of representations and mapping degree}, Preprint: arXiv:1703.07443. 
 
\bibitem{DW}
H. Duan and S.Wang, {\em Non-zero degree maps between 2n-manifolds}, Acta Math. Sin. (Engl. Ser.) {\bf 20} (2004), 1--14.

\bibitem{EilenbergSteenrod}
S. Eilenberg, {\em On the problems of topology}, Ann. of Math. (2), {\bf 50} (1949), 247--260.

\bibitem{Gromov}
M. Gromov, {\em Volume and bounded cohomology}, Inst. Hautes \'Etudes Sci. Publ. Math., {\bf 56} (1982), 5--99.

\bibitem{Gromov1}
M. Gromov, {\em Hyperbolic groups}, in ``Essays in Group Theory'', Math. Sci. Res. Inst. Publ., Springer, New York-Berlin {\bf 8} (1987), 75--263.

\bibitem{dH}
P. de la Harpe, {\em Brouwer degree, domination of manifolds and groups presentable by products}, Bull. Manifold Atlas (2017).

\bibitem{IY}
H. Inoue and K. Yano, {\em The Gromov invariant of negatively curved manifolds}, Topology {\bf 21} (1982), no. 1, 83--89. 

\bibitem{K}
C. Kennedy, {\em Construction of maps by Postnikov towers,} Ph.D. Thesis, The Ohio State University, 2018.

\bibitem{LN}
J.-F. Lafont and C. Neofytidis, {\em Sets of degrees of maps between $SU(2)$-bundles over the $5$-sphere}, to appear in Transform. Groups. Preprint available at https://arxiv.org/abs/1710.10440.

\bibitem{LS}
J.-F. Lafont and B. Schmidt, {\em Simplicial volume of closed locally symmetric spaces of non-compact type}, Acta Math. {\bf 197} (2006), no. 1, 129--143. 

\bibitem{MiT}
M. Mimura and H. Toda, {\em Homotopy groups of $SU(3)$, $SU(4)$ and $Sp(2)$}, J. Math. Kyoto Univ. {\bf 3} (2) (1964), 217--250.

\bibitem{Min}
I. Mineyev, {\em Straightening and bounded cohomology of hyperbolic groups}, Geom. Funct. Anal. {\bf 11} (2001), 807--839.

\bibitem{Min1}
I. Mineyev, {\em Bounded cohomology characterizes hyperbolic groups}, Q. J. Math. {\bf 53} no. 1 (2002), 5--73.

\bibitem{Neo}
C. Neofytidis, {\em Degrees of self-maps of products}, Int. Math. Res. Not. IMRN {\bf 22} (2017), 6977--6989.

\bibitem{NeoHopfdegrees}
C. Neofytidis, {\em On a problem of Hopf for circle bundles over aspherical manifolds with hyperbolic fundamental groups},  Preprint: arXiv:1712.03582.

\bibitem{RT}
J. G. Ratcliffe and S. T. Tschantz, {\em Some examples of aspherical 4-manifolds that are homology 4-spheres}, Topology {\bf 44} (2005), no. 2, 341--350. 

\bibitem{Thom}
R. Thom, {\em Quelques propri\'et\'es globales des vari\'et\'es diff\'erentiables}, Comment. Math. Helv. {\bf 28} (1954), 17--86. 

\end{thebibliography}

\end{document}